\newtheorem{thm}{Theorem}[section]
\newtheorem{pro}[thm]{Proposition}
\newtheorem{lem}[thm]{Lemma}
\newtheorem{ex}[thm]{Example}
\def\l{\langle}
\def\r{\rangle}
\date{}
\title{\normalsize\bf THE ENGEL ELEMENTS IN GENERALIZED $FC$-GROUPS}
\author{\small{\textsc{Antonio Tortora} and \textsc{Giovanni Vincenzi}}\\
\small{Dipartimento di Matematica, Universit\`a di Salerno}\\
\small{Via Giovanni Paolo II, 132 - 84084 - Fisciano (SA), Italy}\\
\small{E-mail: antortora@unisa.it, vincenzi@unisa.it}}
\begin{document}
\maketitle

\begin{abstract}
\noindent We generalize to $FC^*$, the class of generalized $FC$-groups introduced in [F. de Giovanni,
A. Russo, G. Vincenzi, \textit{Groups with restricted conjugacy classes},
Serdica Math. J. {\bf 28} (2002), 241--254], a result of Baer on Engel elements.
More precisely, we prove that the sets of left Engel elements and
bounded left Engel elements of an $FC^*$-group $G$ coincide with the Fitting
subgroup; whereas the sets of right Engel elements and
bounded right Engel elements of $G$ are subgroups and the former coincides with
the hypercentre. We also give an example of an $FC^*$-group for
which the set of right Engel elements contains properly the set of bounded right Engel elements.\\

\noindent{\bf 2010 Mathematics Subject Classification:} 20F45; 20F24\\
{\bf Keywords:} Engel elements, generalized $FC$-groups
\end{abstract}

\section{Introduction}

Let $n$ be a positive integer and $x,y$ be elements of a group
$G$. The commutator $[x,_n y]$ is defined inductively by the rules
$$[x,_1 y]=x^{-1}x^y\quad {\rm and,\, for}\; n\geq 2,\quad [x,_n y]=[[x,_{n-1} y],y].$$
An element $a\in G$ is called a {\em left Engel element} if for
any $g\in G$ there exists $n=n(a,g)\geq 1$ such that $[g,_n a]=1$.
If $n$ can be chosen independently of $g$, then $a$ is called a
{\em left $n$-Engel element}. Moreover, $a$ is a {\em bounded left
Engel element} if it is left $n$-Engel for some $n\geq 1$.
Similarly, an element $a\in G$ is called a {\em right Engel
element} if the variable $g$ appears on the right, i.e. for any
$g\in G$ there exists $n=n(a,g)\geq 1$ such that $[a,_n g]=1$; in
addition, if $n=n(a)$, then $a$ is a {\em right $n$-Engel
element} or simply a {\em bounded right Engel element}. By a
well-known result of Heineken \cite[Theorem 7.11]{Ro2}, the
inverse of any right Engel element is a left Engel element and the
inverse of any right $n$-Engel element is a left $(n+1)$-Engel
element.

Following \cite{Ro2}, we denote by $L(G)$ and $\overline{L}(G)$
the sets of left Engel  elements and bounded left Engel
elements of $G$, respectively; and by $R(G)$ and $\overline{R}(G)$
the sets of right Engel elements and bounded right Engel
elements of $G$, respectively. Thus
\begin{equation}\label{RL}
R(G)^{-1}\subseteq L(G)\quad {\rm and}\quad \overline{R}(G)^{-1}\subseteq \overline{L}(G).
\end{equation}
It is also clear that these four subsets are invariant under
automorphisms of $G$, but it is still unknown whether they are
subgroups. This is a very long-standing problem, even if Bludov announced recently
that there exists a group $G$ for which $L(G)$ is not a subgroup
\cite{Bl}.

We mention that $L(G)$ contains the Hirsch-Plotkin radical $HP(G)$
of $G$ and $\overline{L}(G)$ contains the Baer radical $B(G)$ of
$G$; whereas, $R(G)$ contains the hypercentre $\overline{Z}(G)$ of
$G$ and $\overline{R}(G)$ contains $Z_{\omega}(G)$, the
$\omega$-hypercentre of $G$ \cite[Lemma 7.12]{Ro2}. Recall that
$HP(G)$ is the unique maximal normal locally nilpotent subgroup
containing all normal locally nilpotent subgroups of $G$
\cite[Part 1, p. 58]{Ro2}; and $B(G)$ is the subgroup generated by
all elements $x\in G$ such that $\l x\r$ is subnormal in $G$.
Notice also that, by a famous example of Golod \cite{Go}, $L(G)$
can be larger than $HP(G)$. However, if $G$ is a soluble group,
then $L(G)=HP(G)$ and $\overline{L}(G)=B(G)$ \cite[Theorem
7.35]{Ro2}. This latter result is due to Gruenberg, who also proved
that in this case $R(G)$ and $\overline{R}(G)$ are always
subgroups and that there exists a soluble group $G$ such that
$Z_{\omega}(G)\subset
\overline{R}(G), \overline{Z}(G)\subset R(G)$ and $\overline{R}(G)\subset R(G)$ \cite{Gr}. On
the other hand, a remarkable theorem of Baer shows that groups
satisfying the maximal condition have a fine Engel structure:

\begin{thm}[see Theorem 7.21 of \cite{Ro2}]\label{Baer}
Let $G$ be a group which satisfies the maximal condition. Then
$L(G)$ and $\overline{L}(G)$ coincide with the Fitting subgroup of
$G$, and $R(G)$ and $\overline{R}(G)$ coincide with the
hypercentre of $G$, which equals $Z_k(G)$ for some finite $k$.
\end{thm}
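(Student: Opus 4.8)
We split the statement into its left--Engel and right--Engel halves, reducing each to a nilpotency question about a single normal subgroup and then extracting the stated description from the maximal condition.

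\emph{The left Engel elements.} The first point is that in a group $G$ with max the three radicals coincide, $\mathrm{Fit}(G)=HP(G)=B(G)$, and this common subgroup is nilpotent: each of them is finitely generated by the maximal condition, so $\mathrm{Fit}(G)$ is a product of finitely many normal nilpotent subgroups and hence nilpotent by Fitting's theorem, $HP(G)$ is a finitely generated locally nilpotent group (nilpotent), and $B(G)$ is a finitely generated Baer group (nilpotent), while a normal nilpotent subgroup is simultaneously locally nilpotent and generated by elements with subnormal cyclic subgroups, forcing the three to agree. Since the elements of a normal nilpotent subgroup of class $c$ are left $(c+1)$-Engel (the commutators $[g,{}_{i}a]$ with $a$ in the subgroup descend through its lower central series), this already gives $\mathrm{Fit}(G)\subseteq\overline{L}(G)\subseteq L(G)$. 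The substance is the reverse inclusion $L(G)\subseteq\mathrm{Fit}(G)$. Given $a\in L(G)$, I would pass to the normal closure $N=\langle a^{G}\rangle$, which is finitely generated by the maximal condition and is generated by conjugates of $a$, each of them a left Engel element of $N$; it then suffices to prove that $N$ is nilpotent, for then $N\le\mathrm{Fit}(G)$. This I would obtain by (i) establishing, or invoking, Baer's theorem that a group with max all of whose elements are left Engel is nilpotent --- the usual argument passes to $G/HP(G)$, again Noetherian but with trivial Hirsch--Plotkin radical, and rules out a nontrivial such group by analysing a chief factor --- and (ii) bridging the gap between ``$N$ is generated by left Engel elements'' and ``$N$ is an Engel group'': using that $a$ is left Engel and $N$ finitely generated one shows each $\langle a^{x}\rangle$ is ascendant in $N$, hence subnormal since $N$ has max, so $N=B(N)$ is a finitely generated Baer group and therefore nilpotent.

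\emph{The right Engel elements.} Because $G$ has max, the upper central series strictly ascends only finitely often and stabilises at a finite ordinal $k$, so $\overline{Z}(G)=Z_{\omega}(G)=Z_{k}(G)$; being nilpotent and normal, $\overline{Z}(G)$ lies in $\mathrm{Fit}(G)$. Combined with the inclusions recalled in the introduction this yields $Z_{k}(G)=Z_{\omega}(G)\subseteq\overline{R}(G)\subseteq R(G)$, and everything collapses to $Z_{k}(G)$ once we prove $R(G)\subseteq\overline{Z}(G)$. Here I would first use Heineken's relation $R(G)^{-1}\subseteq L(G)$: since $L(G)=\mathrm{Fit}(G)$ has just been shown to be a subgroup, $R(G)\subseteq\mathrm{Fit}(G)=:F$, so every right Engel element already lies in the nilpotent normal subgroup $F$. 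Passing to $G/\overline{Z}(G)$, it then remains to show that a group with max and trivial hypercentre has $R(G)=1$: assuming $a\in R(G)\setminus\{1\}$, its normal closure $\langle a^{G}\rangle\le F$ furnishes a nontrivial abelian normal subgroup $V$ which, regarded as a Noetherian $\mathbb{Z}G$-module, is cyclically generated by $a$ and has the property that each $g-1$ acts on it locally nilpotently (from $[a,{}_{n}g]=1$); analysing this module one produces a nonzero element centralised by $G$, contradicting the triviality of the hypercentre. Hence $R(G)=\overline{R}(G)=\overline{Z}(G)=Z_{k}(G)$.

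\emph{Main obstacle.} The crux is the left--Engel side, and precisely the passage from ``generated by (or consisting of) left Engel elements'' to genuine nilpotency: proving that under the maximal condition each $\langle a^{x}\rangle$ is ascendant, and securing Baer's theorem that Noetherian Engel groups are nilpotent. Once that machinery is available the right--Engel half follows from it together with Heineken's inequality, the finiteness of the upper central series, and the module-theoretic bookkeeping inside $\mathrm{Fit}(G)$.
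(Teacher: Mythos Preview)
The paper does not prove this theorem at all: Theorem~\ref{Baer} is quoted verbatim from Robinson's book with the attribution ``see Theorem 7.21 of \cite{Ro2}'' and is used thereafter purely as a black box (in Lemmas~\ref{lemA}, \ref{gamma} and Proposition~\ref{proB}). There is therefore no proof in the paper to compare your proposal against.

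That said, your outline is recognisably the standard argument one finds in \cite{Ro2}. You have located the genuine difficulty correctly: the whole theorem hinges on proving that, under max, a left Engel element $a$ has $\langle a\rangle$ subnormal in $G$ (equivalently, $a\in B(G)$). Your sketch asserts this step (``using that $a$ is left Engel and $N$ finitely generated one shows each $\langle a^{x}\rangle$ is ascendant in $N$'') but does not carry it out; the classical proof is a maximal-counterexample argument exploiting max directly, rather than anything about $N$ being finitely generated. Once that is in hand, your reductions --- $L(G)\subseteq B(G)=\mathrm{Fit}(G)$, then Heineken's inclusion to push $R(G)$ into $\mathrm{Fit}(G)$, then a hypercentre-triviality argument --- are the right shape. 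Your right-Engel half is slightly more elaborate than necessary: once $R(G)\subseteq\mathrm{Fit}(G)$, a minimal normal subgroup of $G$ inside $R(G)$ (which exists by max) is finite abelian and, being generated by right Engel elements, is forced into $Z(G)$; iterating gives $R(G)\subseteq\overline{Z}(G)$ without the module-theoretic analysis you propose.
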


There are a series of wide generalizations of Theorem
\ref{Baer} (see \cite[7.2 and 7.3]{Ro2} and \cite{Ab,ABT} for an
account). For instance, in \cite{Pl}, Plotkin proved that $L(G)=HP(G)$ and
$R(G)$ is a subgroup whenever $G$ is a group with an ascending series whose factors satisfy max locally (i.e.,
every finitely generated subgroup has the maximal condition).

The aim of this note is to extend Theorem \ref{Baer} to
the class of $FC^n$-groups, which has been introduced in
\cite{dGRV} as follows. Let $FC^0$ be the class of finite groups, and suppose
by induction hypothesis that for some positive integer $n$ a group
class $FC^{n-1}$ has been defined. A group $G$ is called an {\em
$FC^n$-group} if for any element $x\in G$ the factor group
$G/C_G(x^G)$ belongs to the class $FC^{n-1}$, where $x^G$ is the
normal closure of $\l x\r$ in $G$. It is easy to see that the set $FC^n(G)=\{x\in G\,|\,G/C_G(x^G)$ is an $FC^{n-1}$-group$\}$ is a subgroup of $G$, the so-called {\em $FC^n$-centre} of $G$. Hence $G$ is an $FC^n$-group if and only if $G=FC^n(G)$. Of course, $FC^1$ is the class of
$FC$-groups, namely groups with finite conjugacy classes. More generally, a group is an {\em $FC^*$-group} if it is an $FC^n$-group for some $n\geq 0$.

The investigation of properties, that are common to finite groups and
nilpotent groups, has been satisfactory for $FC^*$-groups \cite{dGRV, RRV,
RomVin, KV}. It turns out that every finite-by-nilpotent group is an $FC^*$-group and, conversely, every $FC^*$-group is locally (finite-by-nilpotent) \cite[Proposition 3.6]{dGRV}. A group $G$ is said to be {\em extended residually finite}, or briefly an {\em $ERF$-group}, if every subgroup is closed in the profinite topology, i.e. every subgroup of $G$ is an intersection of subgroups of finite index. A complete classification of $ERF$-groups in the class of $FC^*$-groups is given in \cite{RRV}.

In Section 2 we prove that, if $G$ is an $FC^n$-group, then $L(G)$ and $\overline{L}(G)$
coincide with the Fitting subgroup of $G$; whereas $R(G)$ and
$\overline{R}(G)$ are subgroups of $G$ and, in particular, $R(G)$ coincides with the hypercentre of $G$, which equals $Z_{\omega+(n-1)}(G)$. It remains an open question whether $\overline{R}(G)$ coincides with the $\omega$-hypercentre, when $G$ is an $FC^n$-group. Nevertheless, we show that $R(G)=\overline{R}(G)=Z_{\omega}(G)$ under the additional assumption that $G$ is a periodic $ERF$-group. We also give an example of a non-periodic $FC^2$-group $G$ such that $G$ is $ERF$ and $\overline{R}(G)\subset R(G)$.

\section{The results}
Given an arbitrary group $X$, we denote by $F(X)$ the Fitting subgroup of $X$.

\begin{lem}\label{lemA} Let $G$ be an $FC^n$-group. Then the normal closure of any left Engel element of $G$ is nilpotent and, consequently, $$L(G)=\overline{L}(G)=F(G).$$
\end{lem}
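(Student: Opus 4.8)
The plan is to prove by induction on $n$ the stronger statement that every left Engel element $a$ of an $FC^n$-group $G$ has \emph{nilpotent} normal closure; the equalities $L(G)=\overline{L}(G)=F(G)$ then follow quickly. For the base case $n=0$ the group $G$ is finite, hence satisfies the maximal condition, so Theorem \ref{Baer} gives $L(G)=\overline{L}(G)=F(G)$, and since the Fitting subgroup of a finite group is nilpotent, $a^G\leq F(G)$ is nilpotent for every $a\in L(G)$.

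For the inductive step, assume $n\geq 1$ and that the claim holds for $FC^{n-1}$-groups. Let $a\in L(G)$, put $N=a^G$ and $C=C_G(N)$. By the very definition of $FC^n$-group the quotient $\overline{G}=G/C$ is an $FC^{n-1}$-group, and the image $\overline a$ of $a$ is a left Engel element of $\overline{G}$, since the left Engel condition is inherited by homomorphic images. The normal closure of $\overline a$ in $\overline{G}$ is $NC/C\cong N/(N\cap C)=N/Z(N)$, so by the inductive hypothesis $N/Z(N)$ is nilpotent; as a group whose quotient by its centre is nilpotent of class $c$ is itself nilpotent of class at most $c+1$, we conclude that $N=a^G$ is nilpotent. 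In particular $a^G$ is a normal nilpotent subgroup of $G$, so $a\in a^G\leq F(G)$, and therefore $L(G)\subseteq F(G)$.

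For the reverse inclusion, take $a\in F(G)$. Writing $a$ as a product of finitely many elements, each lying in a normal nilpotent subgroup of $G$, and applying Fitting's theorem, we get a normal nilpotent subgroup $M$ of $G$, say of class $c$, with $a\in M$. Then for every $g\in G$ we have $[g,a]\in M$, whence $[g,_k a]\in\gamma_k(M)$ for all $k\geq 2$ by an immediate induction, and in particular $[g,_{c+1}a]=1$. Thus $a$ is a bounded left Engel element, so $F(G)\subseteq\overline{L}(G)$. Combining this with the trivial inclusion $\overline{L}(G)\subseteq L(G)$ and with $L(G)\subseteq F(G)$ proved above gives $L(G)=\overline{L}(G)=F(G)$.

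\textbf{Main obstacle.} The crux is the inductive step: one has to check that factoring out $C_G(a^G)$ simultaneously lowers the $FC$-level by one — immediate from the definition of $FC^n$-group — while keeping $a$ a left Engel element, and then to identify the normal closure of the image of $a$ as $N/Z(N)$, so that one further central layer pulls $N$ itself into the nilpotent range. Everything else (Fitting's theorem, the lower central series computation, and $\overline{L}(G)\subseteq L(G)$) is routine.
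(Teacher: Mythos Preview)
Your proof is correct. Both you and the paper reduce to Baer's theorem for finite groups, but the reductions are different. The paper invokes a structural lemma from \cite{dGRV} (Lemma~3.7 there) asserting that in an $FC^n$-group the quotient $a^G/Z_n(a^G)$ is finite; Baer's theorem applied to this finite quotient puts $aZ_n(a^G)$ into $F(a^G)/Z_n(a^G)$, and since $F(a^G)$ is characteristic in $a^G$ and hence normal in $G$, one gets $a^G=F(a^G)$ nilpotent in one stroke, with no induction. Your argument instead proceeds by induction on $n$, using only the recursive definition of $FC^n$: passing to $G/C_G(a^G)$ drops the $FC$-level by one, and identifying the image of $a^G$ with $a^G/Z(a^G)$ lets the inductive hypothesis do the work, peeling off one central layer at a time. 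Your route is more self-contained --- it needs no external structural result about $FC^n$-groups beyond the definition --- while the paper's route is shorter once that lemma is available. Your direct verification of $F(G)\subseteq\overline{L}(G)$ via Fitting's theorem is also fine; the paper simply cites \cite[Lemma~7.12]{Ro2} for $F(G)\subseteq B(G)\subseteq\overline{L}(G)$.
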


\begin{proof}
Let $a\in L(G)$. By \cite[Lemma 3.7]{dGRV} the quotient group $a^G/Z_n(a^G)$ is finite. Applying Theorem
\ref{Baer}, we have
$$L(a^G/Z_n(a^G))=F(a^G/Z_n(a^G))=F(a^G)/Z_n(a^G),$$
where $F(a^G)$ is nilpotent because so is $F(a^G/Z_n(a^G))$. From
$aZ_n(a^G)\in L(a^G/Z_n(a^G))$, we get $a\in F(a^G)$. But
$F(a^G)$ is characteristic in $a^G$ and hence normal in $G$.
Thus $a^G=F(a^G)$ and $a^G$ is nilpotent. In particular $a\in F(G)$, that is $L(G)\subseteq F(G)$. It follows that that $L(G)=\overline{L}(G)=F(G)$,
because $F(G)\subseteq B(G)\subseteq\overline{L}(G)\subseteq
L(G)$ \cite[Lemma 7.12]{Ro2}.
\end{proof}

We recall that any $FC^*$-group $G$ satisfies max locally \cite[Proposition 3.6]{dGRV} and therefore, in according to Plotkin \cite{Pl}, the set of right Engel elements of $G$ is always a subgroup.

\begin{lem}\label{gamma}
Let $G$ be an $FC^n$-group and $a\in \gamma_n(G)\cap R(G)$. Then $a\in Z_k(G)$ for some $k=k(a)$.
\end{lem}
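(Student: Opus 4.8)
The plan is to argue by induction on $n$, exploiting that $R(G)$ is a subgroup (this is Plotkin's theorem, applicable since every $FC^*$-group satisfies max locally, as recalled above). Since $R(G)$ is a subgroup invariant under conjugation and $a\in\gamma_n(G)$, the normal closure $a^G$ is contained in $\gamma_n(G)\cap R(G)$ and \emph{every} element of $a^G$ is a right Engel element of $G$; this is the observation I would use throughout.

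For the base case $n=1$ the group $G$ is an $FC$-group, $\gamma_1(G)=G$, and $a^G$ is a finitely generated $FC$-group, hence polycyclic-by-finite, so it satisfies the maximal condition. Thus the descending chain $a^G\geq[a^G,G]\geq[a^G,_2G]\geq\cdots$ stabilises at some $B=[a^G,_kG]=[a^G,_{k+1}G]$ with $k$ finite. As $B$ is a normal subgroup of $G$ all of whose elements are right Engel, applying Theorem \ref{Baer} to $B$ gives $\overline{R}(B)=R(B)=B=Z_t(B)$ for finite $t$, so $B$ is nilpotent; after a routine reduction (treating the finite torsion subgroup $T$ of $B$ separately, and replacing $G$ by $G/T$) one may assume $B$ torsion-free. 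If $B\neq1$ then $Z(B)\cong\mathbb{Z}^r$ is normal in $G$, and since every element of $Z(B)$ is right Engel, each element of $G$ acts on $Z(B)$ as a unipotent matrix of $GL_r(\mathbb{Z})$; by Kolchin's theorem $G/C_G(Z(B))$ is nilpotent, whence $Z(B)\leq Z_r(G)$. Climbing the upper central series of $B$ in the same fashion yields $B\leq Z_\ell(G)$ for some finite $\ell$, so $B=[B,G]\leq Z_{\ell-1}(G)$ and, iterating, $B=1$. Hence $[a^G,_kG]=1$ and $a\in a^G\leq Z_k(G)$. (The finite-torsion pieces are handled by the same device: a finite normal subgroup whose elements are right Engel embeds, together with the finite group it induces on itself, in a finite group, so Theorem \ref{Baer} places it in $Z_{\mathrm{fin}}(G)$.)

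For $n>1$, put $\bar G=G/C_G(a^G)$; by definition of $FC^n$ this is an $FC^{n-1}$-group, the image $\bar a$ of $a$ lies in $\gamma_n(\bar G)\subseteq\gamma_{n-1}(\bar G)\cap R(\bar G)$, and the inductive hypothesis gives $\bar a\in Z_{k'}(\bar G)$ for some finite $k'$. This means $[a,g_1,\dots,g_{k'}]\in C_G(a^G)$ for all $g_i\in G$; since these commutators also lie in $a^G$, one gets $W:=[a^G,_{k'}G]\leq a^G\cap C_G(a^G)=Z(a^G)$. Thus $W$ is an \emph{abelian} normal subgroup of $G$, contained in $\gamma_{n+k'}(G)\cap R(G)$, and it suffices to prove $W\leq Z_j(G)$ for some finite $j$: then $[a^G,_{k'+j}G]=1$ and $a\in a^G\leq Z_{k'+j}(G)$.

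The crux — and the step I expect to be the main obstacle — is therefore the abelian case: an abelian normal subgroup $W$ of an $FC^n$-group $G$ all of whose elements are right Engel lies in $Z_j(G)$ for some finite $j$. Here $G/C_G(W)$ is again an $FC^{n-1}$-group acting faithfully on $W$, and the Engel condition says each of its elements acts locally unipotently on $W$. The difficulty is to promote this element-wise local unipotence to a \emph{uniform} bound on the height of $W$ in $G$; my plan is to pass to finitely generated subgroups (where $G$ satisfies the maximal condition, so Theorem \ref{Baer} supplies a bound), use \cite[Lemma 3.7]{dGRV} to control the relevant sections, and combine this with a Kolchin-type argument on the induced linear actions in order to keep the bound independent of the chosen finitely generated subgroup. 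Once $W\leq Z_j(G)$ is secured, the induction closes and $a\in Z_k(G)$ with $k=k'+j$.
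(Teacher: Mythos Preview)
Your proposal has a genuine gap, and it stems from missing the one structural fact that makes this lemma short: by \cite[Theorem~3.2]{dGRV} the subgroup $\gamma_n(G)$ lies in the $FC$-centre of $G$. Hence for $a\in\gamma_n(G)$ the quotient $G/C_G(a^G)$ is not merely an $FC^{n-1}$-group but is \emph{finite}. The paper exploits this directly: write $G=HC_G(N)$ with $N=a^G$ and $H$ finitely generated; then $HN$ is a finitely generated $FC^n$-group, so it satisfies max, and Theorem~\ref{Baer} gives $N\le R(HN)=Z_k(HN)$ for some finite $k$. Since elements of $C_G(N)$ contribute nothing to iterated commutators $[a,g_1,\dots,g_k]$, one obtains $a\in Z_k(G)$. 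No induction on $n$ is needed.

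Without that observation your argument does not close. In the base case $n=1$ you assert that the descending chain $a^G\ge[a^G,G]\ge[a^G,_2G]\ge\cdots$ stabilises because $a^G$ satisfies the maximal condition; but max controls \emph{ascending} chains, not descending ones (consider $2^i\mathbb Z$ inside $\mathbb Z$), so this step is unjustified. In the inductive step you correctly reduce to showing that an abelian normal subgroup $W\le R(G)$ lies in some $Z_j(G)$ with a uniform $j$, but you then flag this as ``the main obstacle'' and offer only a plan. That plan---passing to finitely generated subgroups and invoking a Kolchin-type argument---does not explain how $j$ is kept independent of the finitely generated subgroup chosen, and this is exactly the difficulty. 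The ingredient you are missing is again that $W\le a^G$ with $a\in\gamma_n(G)\le FC(G)$, which forces $G/C_G(W)$ to be finite and reduces everything to a single application of Baer's theorem.
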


\begin{proof}
Let $N$ be the normal closure of $a$ in $G$. Since $\gamma_n(G)$ is contained in the $FC$-centre of $G$ \cite[Theorem 3.2]{dGRV}, we have that $G/C_G(N)$ is finite. Then $G=HC_G(N)$, where $H$ is a finitely generated subgroup of $G$. Now $HN$ is finitely generated and so it satisfies the maximal condition, by \cite[Proposition 3.6]{dGRV}. Hence, Theorem \ref{Baer} shows that $R(HN)=Z_k(HN)$ for some $k$. But $N\leq R(G)$, so that $N\leq R(HN)=Z_k(HN)$. For any $1\leq i\leq k$, let $g_i=x_i h_i\in G$ with $x_i\in C_G(N)$ and $h_i\in H$. Thus $[a,g_1,\ldots,g_k]=[a, h_1,\ldots,h_k]=1$ and $a\in Z_{k}(G)$, as desired.
\end{proof}

Let $G$ be a group. Following \cite{Gr}, we denote by $\rho(G)$
the set of all elements $a\in G$ such that $\l x\r$ is ascendant
in $\l x, a^G\r$, for any $x\in G$;  and by $\overline{\rho}(G)$
the set of all elements $a\in G$ such that $\l x\r$ is subnormal
in $\l x, a^G\r$ of defect at most $k=k(a)$, for any $x\in G$. By
\cite[Lemma 7.31]{Ro2}, the sets $\rho(G)$ and
$\overline{\rho}(G)$ are characteristic subgroups of $G$
satisfying the following inclusions:
\begin{equation}\label{rho}
\overline{Z}(G)\subseteq\rho(G)\subseteq R(G) \quad{\rm and}\quad Z_{\omega}(G)\subseteq\overline
{\rho}(G)\subseteq \overline{R}(G).
\end{equation}
The subgroups $\rho(G)$ and $R(G)$ can be different (see for instance \cite[Part 2, p. 59]{Ro2}) and it is possible that $\overline{Z}(G)=1$ and $\rho(G)=R(G)\neq 1$ \cite{Gr}. In contrast with this, for $FC^n$-groups, we have:

\begin{lem}\label{lemB} Let $G$ be an $FC^n$-group. Then
\begin{itemize}
\item[$(i)$] $R(G)=\rho(G)=\overline{Z}(G)=Z_{\omega+(n-1)}(G)$;
\item[$(ii)$] $\overline{R}(G)=\overline{\rho}(G)$.
\end{itemize}
In particular, if $G$ is an $FC$-group, then
$$R(G)=\overline{R}(G)=Z_{\omega}(G).$$
\end{lem}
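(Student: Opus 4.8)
The plan is to treat (i) and (ii) separately; (i) is an Engel-commutator argument that feeds into Lemma~\ref{gamma}, while (ii) rests on showing that, for a bounded right Engel element $a$, the subgroup $\langle x\rangle a^{G}$ is nilpotent with a bound on its class depending on $a$ alone.

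\emph{Proof of (i).} By Plotkin's theorem (recalled before Lemma~\ref{gamma}) $R(G)$ is a subgroup of $G$, and since it is invariant under automorphisms it is characteristic, hence normal. Therefore $[R(G),{}_{n-1}G]$ lies in $\gamma_n(G)$ (because $R(G)\le\gamma_1(G)=G$) and also in $R(G)$ (by normality), so $[R(G),{}_{n-1}G]\le\gamma_n(G)\cap R(G)$. By Lemma~\ref{gamma} every element of $\gamma_n(G)\cap R(G)$ lies in some $Z_k(G)$ with $k$ finite, hence $\gamma_n(G)\cap R(G)\le Z_\omega(G)$ and so $[R(G),{}_{n-1}G]\le Z_\omega(G)$. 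Reading this in $G/Z_\omega(G)$ gives $R(G)Z_\omega(G)/Z_\omega(G)\le Z_{n-1}\big(G/Z_\omega(G)\big)$, i.e. $R(G)\le Z_{\omega+(n-1)}(G)$. Combining with the chain $Z_{\omega+(n-1)}(G)\le\overline Z(G)\le\rho(G)\le R(G)$ furnished by \eqref{rho} closes the loop, so $R(G)=\rho(G)=\overline Z(G)=Z_{\omega+(n-1)}(G)$.

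\emph{Proof of (ii).} The inclusion $\overline\rho(G)\subseteq\overline R(G)$ is part of \eqref{rho}, so only $\overline R(G)\subseteq\overline\rho(G)$ needs proof. Let $a\in\overline R(G)$ be right $m$-Engel and put $N=a^{G}$. Since $a\in R(G)$, \eqref{RL} gives $a^{-1}\in L(G)$, and as $(a^{-1})^{G}=N$ Lemma~\ref{lemA} shows $N$ is nilpotent, of class $c$ say. Fix $x\in G$ and set $L=\langle x,a^{G}\rangle=\langle x\rangle N$, with $N\trianglelefteq L$. I first claim $[N,{}_m x]\le N'$: indeed every conjugate $a^{g}$ still satisfies $[a^{g},{}_m x]=1$, so in the abelian $\langle x\rangle$-module $N/N'$ the endomorphism $(x-1)^m$ annihilates the generating set $\{a^{g}N':g\in G\}$ and hence the whole module. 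A short computation then shows $L/N'$ (abelian-by-cyclic with $(x-1)^{m}$ acting trivially on $N/N'$) is nilpotent of class at most $m$. Since $N$ is nilpotent of class $c$ and $L/N'$ is nilpotent, P.~Hall's nilpotency criterion \cite{Ro2} yields that $L$ is nilpotent of class bounded by a function of $c$ and $m$; consequently $\langle x\rangle$ is subnormal in $L=\langle x,a^{G}\rangle$ of defect bounded independently of $x$, i.e. $a\in\overline\rho(G)$. Finally, for an $FC$-group $n=1$, so (i) gives $R(G)=Z_\omega(G)$; since $\overline R(G)\subseteq R(G)$ while $Z_\omega(G)\subseteq\overline\rho(G)=\overline R(G)$ by \eqref{rho} and~(ii), we get $R(G)=\overline R(G)=Z_\omega(G)$.

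The step I expect to be the main obstacle is the uniform control of the nilpotency class of $L=\langle x\rangle a^{G}$ in part (ii). One cannot pass from ``$a$ is right $m$-Engel'' to ``$x$ acts nilpotently on $N$'' elementwise, because the set of elements killed by $(x-1)^m$ is not a subgroup; the remedy is to descend to the abelian section $N/N'$, where that set \emph{is} a submodule and therefore contains the images of all conjugates of $a$, hence all of $N/N'$. Once $L/N'$ is known to be nilpotent, P.~Hall's theorem finishes the argument, but one should verify carefully that the resulting class bound depends only on $c$ and $m$ (equivalently, only on $a$) and not on $x$ or on $G$.
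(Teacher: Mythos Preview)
Your proof of part~(i) is essentially identical to the paper's: both use normality of $R(G)$ to push $[a,x_1,\ldots,x_{n-1}]$ into $\gamma_n(G)\cap R(G)$, invoke Lemma~\ref{gamma} to land in $Z_\omega(G)$, and close up with the chain~\eqref{rho}.

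For part~(ii) the two arguments diverge. The paper, after noting via Lemma~\ref{lemA} and~\eqref{RL} that $a^G$ is nilpotent, simply quotes a theorem of Gruenberg \cite[Theorem~1.6]{Gr2} to conclude $a\in\overline\rho(G)$. You instead reprove the relevant content of that theorem by hand: pass to the abelian section $N/N'$, use the right $m$-Engel condition on every conjugate $a^g$ to see that $(x-1)^m$ kills $N/N'$, deduce that $L/N'$ is nilpotent of class bounded in terms of $m$ (your bound ``at most $m$'' should be $m+1$, but this is harmless), and finish with P.~Hall's criterion to bound the class of $L=\langle x\rangle a^G$ by a function of $c$ and $m$ alone. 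Your route is more self-contained and makes the dependence of the subnormal defect on $a$ explicit; the paper's route is shorter but relies on an external reference. Both are correct, and the final clause on $FC$-groups is handled the same way.
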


\begin{proof}
$(i)$ Clearly $Z_{\omega+(n-1)}(G)\subseteq\overline{Z}(G)\subseteq R(G)$, by (\ref{rho}). Let $a\in R(G)$. As  $R(G)$ is normal in $G$, for any $x_1,\ldots,x_{n-1}\in G$, we have $[a,x_1,\ldots, x_{n-1}]\in\gamma_n(G)\cap R(G)$ which is contained in $Z_{\omega}(G)$, by Lemma \ref{gamma}. Hence $a\in Z_{\omega+(n-1)}(G)$ and $R(G)\subseteq Z_{\omega+(n-1)}(G)$.

$(ii)$ Let $a\in \overline{R}(G)$. By Lemma \ref{lemA}, jointly with (\ref{RL}), we have that $a^G$ is nilpotent.
It follows that $a\in \overline{\rho}(G)$, by \cite[Theorem
1.6]{Gr2}, and $\overline{R}(G)\subseteq \overline{\rho}(G)$. Thus
$\overline{R}(G)=\overline{\rho}(G)$, by (\ref{rho}).
\end{proof}

A group $G$ is called an {\em Engel group} if $R(G)=G$ or, equivalently, $L(G)=G$. Of course locally nilpotent groups are Engel, but Golod's example \cite{Go} shows that Engel groups need not be locally nilpotent. As a consequence of Lemma \ref{lemB} $(i)$, every Engel $FC^n$-group is hypercentral and its upper central series has length at most $\omega+(n-1)$ (compare with \cite[Theorem 3.9 (b)]{dGRV}). Moreover this bound cannot be replaced by $\omega$ when $n>1$,  see Example \ref{ex}.

By combining Lemma \ref{lemA} and Lemma \ref{lemB}, our main result follows.

\begin{thm}\label{FC*}
Let $G$ be an $FC^n$-group. Then $L(G)$ and $\overline{L}(G)$ coincide with the Fitting subgroup of
$G$; whereas $R(G)=\rho(G)$ coincides with the
hypercentre of $G$, which equals $Z_{\omega+(n-1)}(G)$, and $\overline{R}(G)=\overline{\rho}(G)$.
\end{thm}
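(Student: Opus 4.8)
The plan is simply to assemble Theorem~\ref{FC*} from the three lemmas already established, since by this point all the substantive work has been done. First I would invoke Lemma~\ref{lemA} verbatim: for an $FC^n$-group $G$ the normal closure of every left Engel element is nilpotent, and this yields $L(G)=\overline{L}(G)=F(G)$, which is exactly the first assertion of the theorem. This requires nothing beyond citing the lemma.

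Next I would turn to the right Engel elements. Part $(i)$ of Lemma~\ref{lemB} gives directly the chain of equalities $R(G)=\rho(G)=\overline{Z}(G)=Z_{\omega+(n-1)}(G)$; in particular $R(G)$ is a subgroup (which, as the remark before Lemma~\ref{gamma} notes, also follows from Plotkin's theorem since $FC^*$-groups satisfy max locally), it equals $\rho(G)$, and it coincides with the hypercentre $\overline{Z}(G)$, which is reached already at stage $\omega+(n-1)$ of the upper central series. This is the second assertion. Finally, part $(ii)$ of Lemma~\ref{lemB} gives $\overline{R}(G)=\overline{\rho}(G)$, which is the third and last assertion; in particular $\overline{R}(G)$ is a subgroup as well.

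There is no real obstacle here: the theorem is a bookkeeping corollary of Lemmas~\ref{lemA} and~\ref{lemB}, and the proof is one or two sentences long. If anything, the only point deserving a word of care is to note explicitly that the displayed equalities already make $R(G)$, $\overline{R}(G)$, $\rho(G)$ and $\overline{\rho}(G)$ subgroups, so the "it is still unknown whether they are subgroups" caveat from the introduction is resolved in this class; but even that is immediate from the cited structural facts about $\rho(G)$ and $\overline{\rho}(G)$ in \cite[Lemma 7.31]{Ro2} together with Lemma~\ref{lemB}. So the write-up would read essentially: "This is immediate from Lemma~\ref{lemA} and Lemma~\ref{lemB}."
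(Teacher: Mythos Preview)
Your proposal is correct and matches the paper's own treatment exactly: the paper simply states that the theorem follows by combining Lemma~\ref{lemA} and Lemma~\ref{lemB}, without even writing out a separate proof environment. Your observation that the subgroup property of $R(G)$ and $\overline{R}(G)$ is a by-product is accurate but already implicit in the paper's presentation.
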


The respective position of these subgroups is indicated in the following
diagram (see also the diagram in \cite[Part 2, p. 63]{Ro2}).

{\small
\begin{center}
$\xymatrix{
{ } & F(G)=\overline{L}(G)=L(G)\ar@{-}[d] & { } \\
{ } & Z_{\omega+(n-1)}(G)=\overline{Z}(G)=\rho(G)=R(G)\ar@{-}[dd]\ar@{-}[dr]  & { }\\
{ } & { } & \overline{\rho}(G)=\overline{R}(G)\ar@{-}[dl] \\
{ } & Z_{\omega}(G) & { }\\
}$
\end{center}
}

Notice that if $G$ is a finitely generated $FC^*$-group, then $G$
is finite-by-nilpotent \cite[Proposition 3.6]{dGRV} and so, by the next result,
$R(G)$ and $\overline{R}(G)$ coincide with the $\omega$-hypercentre of $G$.

\begin{pro}\label{proB}
Let $G$ be a finite-by-nilpotent group. Then $$R(G)=\overline{R}(G)=Z_{\omega}(G).$$
\end{pro}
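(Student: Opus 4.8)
The plan is to reduce Proposition \ref{proB} to the finitely generated case, where Theorem \ref{Baer} applies, exactly in the spirit of Lemma \ref{gamma}. Write $F$ for a finite normal subgroup of $G$ with $G/F$ nilpotent, say of class $c$; then $\gamma_{c+1}(G)\leq F$ is finite. By (\ref{rho}) it suffices to prove the two inclusions $R(G)\subseteq Z_{\omega}(G)$ and $\overline{R}(G)\subseteq Z_{\omega}(G)$; since $\overline{R}(G)\subseteq R(G)$, only $R(G)\subseteq Z_{\omega}(G)$ needs argument, and then one checks the bound can be taken to be $\omega$ uniformly, which will give $\overline{R}(G)=Z_{\omega}(G)$ as well.

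First I would take $a\in R(G)$ and put $N=a^G$. Since $G/F$ is nilpotent of class $c$, $\gamma_{c+1}(G)\leq F$, so $[N,\,_{c}G]\leq F$ is finite; hence $C_G([N,\,_cG])$ has finite index in $G$, and in fact it is harmless to enlarge things and assume $G=HC_G(K)$ for a suitable finitely generated subgroup $H$, where $K$ is the finite normal subgroup $[N,\,_cG]$ (or simply $K=N\cap F$ together with the finitely many iterated commutators needed). The subgroup $HN$ is then finitely generated: $N$ is generated by the (finitely many, modulo $F$) conjugates $a^h$ with $h\in H$ together with the finite group $N\cap F$, so $HN=\langle H,\,a^{h_1},\dots,a^{h_r},\,N\cap F\rangle$ is finitely generated. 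A finitely generated finite-by-nilpotent group satisfies the maximal condition, so Theorem \ref{Baer} gives $R(HN)=Z_k(HN)$ for some finite $k$. Since $N\leq R(G)$ and centralizing in $G$ only makes iterated commutators trivial, $N\leq R(HN)=Z_k(HN)$.

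The final step is to transport this back to $G$, and this is the point that needs the mild care one also sees in Lemma \ref{gamma}. Given $a\in N\leq Z_k(HN)$ and arbitrary $g_1,\dots,g_k\in G$, write $g_i=x_ih_i$ with $x_i\in C_G(K)$ and $h_i\in H$; using that $[a,\,_cG]\leq K$ and that the $x_i$ centralize $K$, one shows inductively that the commutator $[a,g_1,\dots,g_m]$ agrees with $[a,h_1,\dots,h_m]$ once $m\geq c$, so $[a,g_1,\dots,g_k]=[a,h_1,\dots,h_k]=1$ (absorbing the extra $c$ steps by taking $k$ large, which is permissible). Hence $a\in Z_k(G)\leq Z_{\omega}(G)$, giving $R(G)\subseteq Z_{\omega}(G)$. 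Because $k$ here is bounded in terms of $a$ only (through $H$ and Theorem \ref{Baer}), the same $a$ is a bounded right Engel element with bound independent of $g$, so in fact $R(G)=\overline{R}(G)=Z_{\omega}(G)$. The main obstacle is the bookkeeping in this last transport step — verifying that the substitution $g_i\mapsto h_i$ is legitimate once the iterated commutator has descended into the finite part $K$ centralized by the $x_i$ — but this is entirely analogous to the computation already carried out in the proof of Lemma \ref{gamma}.
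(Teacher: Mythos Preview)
Your reduction to a finitely generated subgroup in the style of Lemma \ref{gamma} breaks down at the claim that $HN$ is finitely generated. In Lemma \ref{gamma} this works because $a$ lies in the $FC$-centre, so $G/C_G(N)$ is finite and $N=a^G$ is generated by finitely many conjugates of $a$. Here you only know that $K=[N,\,_{c}G]\leq F$ is finite, so $G=HC_G(K)$ with $H$ finitely generated; but $C_G(K)$ need not centralize $N$, and there is no reason why the conjugates $a^h$ $(h\in H)$ together with $N\cap F$ should generate $N$. Concretely, take $G$ to be free nilpotent of class $2$ on generators $x_1,x_2,\dots$ and $a=x_1$. Then $F=1$, $c=2$, $K=[a^G,G,G]=1$, so $C_G(K)=G$ and $H$ may be trivial; yet $a^G=\l a\r\cdot[a,G]$ contains all the independent central commutators $[x_1,x_i]$ and is not finitely generated. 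Hence $HN$ does not satisfy max and Theorem \ref{Baer} cannot be applied to it. The later ``transport'' step, replacing $g_i$ by $h_i$ once the iterated commutator lies in $K$, is essentially sound (one lands in $Z_{k+c}(G)$ rather than $Z_k(G)$), but it presupposes $N\leq Z_k(HN)$, which you have not established.

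The paper's argument sidesteps all of this with a single structural fact you did not invoke: by P.~Hall's theorem \cite[Theorem 4.25]{Ro2}, a finite-by-nilpotent group $G$ satisfies $|G/Z_i(G)|<\infty$ for some $i$. One then applies Theorem \ref{Baer} directly to the finite quotient $G/Z_i(G)$ to get $R(G)Z_i(G)/Z_i(G)\subseteq R(G/Z_i(G))=Z_j(G/Z_i(G))=Z_{i+j}(G)/Z_i(G)$, whence $R(G)\leq Z_{i+j}(G)\leq Z_\omega(G)$. Your approach could be repaired along similar lines---for instance by first showing $F\cap R(G)\leq Z_k(G)$ via Lemma \ref{gamma}'s method (this works, since $C_G(F)$ has finite index and $a^G\leq F$ for $a\in F$), and then using $[R(G),\,_cG]\leq F$---but passing to $G/Z_i(G)$ is both shorter and avoids the bookkeeping entirely.
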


\begin{proof}
By (\ref{rho}) we have $Z_{\omega}(G)\subseteq
\overline{R}(G)\subseteq R(G)$. Since $G$ is finite-by-nilpotent,
there exists $i\geq 0$ such that $G/Z_i(G)$ is finite
\cite[Theorem 4.25]{Ro2}. Then, by Theorem \ref{Baer}, we have
$$R(G)Z_i(G)/Z_i(G)\subseteq R(G/Z_i(G))=Z_j(G/Z_i(G))=Z_{i+j}(G)/Z_i(G)$$
for some $j\geq 0$. It follows that $R(G)\subseteq Z_{i+j}(G)$, so that $R(G)\subseteq Z_{\omega}(G)$.
\end{proof}

In the sequel we restrict our attention to $FC^*$-groups belonging to the class of $ERF$-groups. Let $G$ be any $FC^n$-group and denote by $T(G)$ its torsion subgroup \cite[Corollary 3.3]{dGRV}. By \cite[Theorem 3.6]{RRV}, the group $G$ is $ERF$ if and only if the following conditions hold:
\begin{itemize}
\item[$(i)$] Sylow subgroups of $G$ are abelian-by-finite with finite exponent;
\item[$(ii)$] Sylow subgroups of $\gamma_{n+1}(G)$ are finite;
\item[$(iii)$] $G/T(G)$ is torsion-free nilpotent of finite rank and no quotient of its subnormal subgroups is of $p^{\infty}$-type for any prime $p$.
\end{itemize}

\begin{pro}\label{proC}
Let $G$ be an $FC^*$-group which is $ERF$. Then every periodic right Engel element of $G$ belongs to $Z_k(G)$ for some $k=k(a)$. Hence, if $G$ is periodic, then $$R(G)=\overline{R}(G)=Z_{\omega}(G).$$
\end{pro}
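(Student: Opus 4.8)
The plan is to reduce the periodic $ERF$ case to a situation where Baer's theorem (Theorem~\ref{Baer}) applies, exactly as in the proof of Lemma~\ref{gamma}, but now using the $ERF$ hypothesis to control the relevant normal closure. First I would take a periodic right Engel element $a\in R(G)$ and set $N=a^G$. By Lemma~\ref{lemB}$(i)$ we already know $a\in R(G)=Z_{\omega+(n-1)}(G)$, so $[a,x_1,\dots,x_{n-1}]\in\gamma_n(G)$ for all $x_i\in G$; combining this with the fact that $\gamma_n(G)$ lies in the $FC$-centre of $G$ \cite[Theorem 3.2]{dGRV}, the group $G/C_G(M)$ is finite, where $M$ is the normal closure of the finite set of these commutators (finite because $a^G/Z_n(a^G)$ is finite by \cite[Lemma 3.7]{dGRV}, so only finitely many values occur). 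So it suffices to push $a$ into some $Z_k$.

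Next I would exploit $ERF$: since $a$ is periodic, $N=a^G$ is a periodic $FC^n$-group, hence $N\le T(G)$, and by condition~$(ii)$ in the characterisation of $ERF$ the Sylow subgroups of $\gamma_{n+1}(G)$ are finite; I expect $N$ itself to be abelian-by-finite (via condition~$(i)$, Sylow subgroups of $G$ have finite exponent and are abelian-by-finite) and, more to the point, I would argue that $N\cap\gamma_{n+1}(G)$ is finite and $N/(N\cap\gamma_{n+1}(G))$ embeds in $G/\gamma_{n+1}(G)$, which is nilpotent. Thus $N$ is (finite)-by-nilpotent, hence so is $NH$ for any finitely generated $H$; alternatively — and this is cleaner — since $G=HC_G(M)$ with $H$ finitely generated, the subgroup $K=\langle a^H\rangle$ is finitely generated (finite set of conjugates, each periodic) and periodic, hence finite by local finiteness of periodic $FC^*$-groups \cite[Proposition 3.6]{dGRV}. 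Then $KH$ is a finitely generated $FC^*$-group, so finite-by-nilpotent, hence satisfies the maximal condition, and Proposition~\ref{proB} (or Theorem~\ref{Baer}) gives $R(KH)=Z_j(KH)$ for some $j$.

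Now I would transfer this back to $G$ by the argument of Lemma~\ref{gamma}: $a\in R(G)\cap KH$, and $a^{KH}\le K\le R(G)$, so $a^{KH}\le R(KH)=Z_j(KH)$. Writing an arbitrary $g\in G$ as $g=xh$ with $x\in C_G(M)$, $h\in H$, one checks that conjugation of $a$ (and of the iterated commutators $[a,h_1,\dots,h_{i}]$, which lie in $M$ and are thus centralised by the $x$'s) by $g$ agrees with conjugation by $h\in H\le KH$; hence $[a,g_1,\dots,g_j]=[a,h_1,\dots,h_j]=1$ and $a\in Z_j(G)$ with $k=k(a)=j$. This proves the first assertion. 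For the final statement, assume $G$ periodic: then every element of $R(G)$ is periodic, hence each lies in some $Z_{k}(G)$ with finite $k$, so $R(G)\subseteq Z_\omega(G)$; combined with $Z_\omega(G)\subseteq\overline{R}(G)\subseteq R(G)$ from (\ref{rho}) we get $R(G)=\overline{R}(G)=Z_\omega(G)$.

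The main obstacle I anticipate is the bookkeeping in the second paragraph: making sure the subgroup one hands to Baer's theorem is genuinely finitely generated \emph{and} captures enough of $N=a^G$ so that the descent in the third paragraph works — i.e.\ that $a^{KH}$ stays inside something known to lie in $R(KH)$. The periodicity of $a$ is what rescues this, since it upgrades "finitely generated" to "finite" for $K=\langle a^H\rangle$ and lets local finiteness of periodic $FC^*$-groups do the work; without periodicity (as Example~\ref{ex} and the non-periodic $FC^2$ example will show) this collapse to $Z_\omega$ genuinely fails.
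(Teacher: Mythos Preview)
Your argument has a genuine gap in the transfer step. You write $g_i=x_ih_i$ with $x_i\in C_G(M)$ and claim $[a,g_1,\ldots,g_j]=[a,h_1,\ldots,h_j]$. But already $[a,g_1]=[a,h_1][a,x_1]^{h_1}$ equals $[a,h_1]$ only if $x_1$ centralises $a$ --- and there is no reason for $C_G(M)$ to centralise $a$, since $a\notin M$: your $M$ is the normal closure of the $(n-1)$-fold commutators $[a,x_1,\ldots,x_{n-1}]\in\gamma_n(G)$, and $a$ itself need not lie there. In Lemma~\ref{gamma} the analogous transfer works precisely because there $N=a^G$, so $C_G(N)\le C_G(a)$; you lose this by shrinking $N$ to $M$. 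There are secondary problems too: the claim that only finitely many values $[a,x_1,\ldots,x_{n-1}]$ occur does not follow from $a^G/Z_n(a^G)$ being finite, and the assertion that $a^H$ is a finite set requires $[H:C_H(a)]<\infty$, which is not given. Most tellingly, your ``cleaner'' route never invokes the $ERF$ hypothesis at all, so if correct it would show that every periodic right Engel element of an arbitrary $FC^*$-group lies in $Z_\omega(G)$ --- stronger than what the paper asserts, and a red flag.

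The paper's proof is quite different and uses $ERF$ condition~$(ii)$ essentially. One first reduces to $a$ a $p$-element and records the elementary fact that a finite normal subgroup of order $m$ contained in some $Z_i(G)$ with $i\ge 1$ already lies in $Z_m(G)$. For arbitrary $x_1,\ldots,x_n\in G$, Lemma~\ref{gamma} gives $[a,x_1,\ldots,x_n]\in Z_i(G)$ for some $i$ depending on the $x_j$; its normal closure $N$ is then a $p$-subgroup of $\gamma_{n+1}(G)$ contained in $Z_i(G)$. By $ERF$ the Sylow $p$-subgroup of $\gamma_{n+1}(G)$ is finite, of order $m$ say, so $|N|\le m$ and the elementary fact forces $N\le Z_m(G)$. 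Since $m$ does not depend on $x_1,\ldots,x_n$, one gets $a\in Z_{m+n}(G)$. The idea you are missing is exactly this uniformisation: Lemma~\ref{gamma} alone places each long commutator in \emph{some} $Z_i$, and the finiteness of the relevant Sylow subgroup of $\gamma_{n+1}(G)$ coming from $ERF$ is what pins down a single bound working for all of them. No transfer-back-from-$H$ argument is needed.
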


\begin{proof}
First notice that, if $N$ is a finite subgroup of $G$ of order $m$ contained in $Z_i(G)$ for some $i\geq 1$, then $N\leq Z_m(G)$. This is true for any arbitrary group and its proof is a straightforward induction on $m$.

Let $a$ be any nontrivial right Engel element of $G$. We may assume that $a$ is a $p$-element, where $p$ is prime. With $x_1,\ldots,x_{n}\in G$, by Lemma \ref{gamma}, we have $[a,x_1,\ldots,x_n]\in Z_i(G)$ for some $i$. Suppose $[a,x_1,\ldots,x_n]\neq 1$ and denote by $N$ the normal closure of $[a,x_1,\ldots,x_{n}]$ in $G$. Then $N\leq P\cap Z_i(G)$, where $i\geq 1$ and $P$ is a Sylow $p$ subgroup of $\gamma_{n+1}(G)$. Now $P$ is finite, say of order  $m$. Then the previous remark implies that $N\leq Z_m(G)$ and so $[a,x_1,\ldots,x_n]\in Z_m(G)$. But Sylow $p$-subgroups of $G$ are isomorphic \cite[Theorem 3.9]{RRV} and so $m$ is independent of $x_1,\ldots, x_n$. Hence $a\in Z_{k}(G)$ where $k=m+n$.
\end{proof}

Next we show that, in our context, the set of bounded right Engel elements can be properly contained in the set of right Engel elements.

\begin{ex}\label{ex}
There exists a non-periodic metabelian $FC^2$-group $G$ such that
$$\overline{R}(G)=Z_{\omega}(G)\quad{\rm and}\quad R(G)=Z_{\omega+1}(G)=G.$$
Further, $Z_{\omega}(G)$ is periodic and $G$ is an $ERF$-group.
\end{ex}

\begin{proof}
Let $p_1<p_2<\ldots$ be a sequence of odd primes and $1<n_1<n_2<\ldots$ be a sequence of integers. For any $i\geq 1$, put
$$P_i=\l a_i,b_i\r$$
where $a_i$ has order $p_i^{n_i}$, $b_i$ has order $p_i^{n_i-1}$ and $a_i^{b_i}=a_i^{1+p_i}$. Then $[a_i,b_i]=a_i^{p_i}$ and therefore, for any $m\geq 1$, we have $[a_i,_m b_i]=a_i^{p_i^m}$. In particular $[a_i,_{n_i} b_i]=1$ and, consequently, the commutator $[a_i,_{n_i-1} b_i]$ is a nontrivial element of  $Z(P_i)$. This leads to $[a_i,b_i]\in Z_{n_i-1}(P_i)$, so that  $P_i= \l a_i, b_i\r$ is nilpotent of class exactly $n_i$.

Let $b_i^ja_i^k$ be an arbitrary element of $P_i$, with $0\leq j< p_i^{n_i-1}$ and $0\leq k< p_i^{n_i}$. By \cite[Lemma 4]{Da}, the map $\alpha_i$ defined by $$(b_i^j a_i^k)^{\alpha_i}=(b_i a_i^p)^j a_i^k$$
is an automorphism of $P_i$. Clearly, $a_i^{\alpha_i}=a_i$ and $b_i^{\alpha_i}=b_i a_i^{p_i}$.

Now form the semidirect product
$$G=\l x\r\ltimes P$$
where $\l x\r$ is infinite cyclic, $\displaystyle P=Dr_{i\geq 1} P_i$ and
$$a_i^x=a_i,\quad b_{i}^x=b_i a_i^{p_i}.$$
If $A=Dr_{i\geq 1}\l a_i\r$, then $G/A$ is abelian and $A\leq C_G(x)$. It follows that $C_G(x^G)=C_G(x)$ and
$G/C_G(x^G)$ is abelian, that is $x\in FC^2(G)$. On the other hand $y^G$ is finite for any $y\in P$. Then so is $G/C_G(y^G)$, which embeds in $Aut(y^G)$. Hence $P\leq FC(G)\leq FC^2(G)$ and $G=FC^2(G)$, namely $G$ is an $FC^2$-group. Of course $G$ is $ERF$ by construction. Notice also that $R(G)=Z_{\omega+1}(G)$, by Lemma \ref{lemB}. But $a_i\in Z_{n_i}(G)$, so that $A\leq Z_{\omega}(G)$ and $Z_{\omega+1}(G)=G$.

Finally let $g=x^{r}y$ be an arbitrary element of $\overline{R}(G)$, where $r\in \mathbb{Z}$ and $y\in P$. Since $y$ is a periodic (right Engel) element, then $y\in Z_{\omega}(G)\subseteq \overline{R}(G)$ by Proposition \ref{proC}. It follows that $x^r$ is an $m$-right Engel element, for some $m\geq 1$.
From $[b_i,x^r]=a_i^{rp_i}$, we get $1=[x^r,_m b_i]^{-1}=(a_i^r)^{p_i^m}$, for any $i$. This forces $r=0$ and therefore $g=y\in P\cap Z_{\omega}(G)$. We conclude that $\overline{R}(G)=Z_{\omega}(G)\leq~P$.
\end{proof}

It is well-known that, if $G$ is an $FC$-group, then $G/Z(G)$ is periodic. This fails for $FC^*$-groups, even if they are $ERF$: there exists a non-periodic $FC^2$-group with trivial centre which is $ERF$ \cite[Example 4.4]{RRV}. One more example, but with nontrivial centre, is then the group given in Example \ref{ex}.

\end{document}